\def\spn{\mathop{\rm span}}
\def\Hom{\operatorname{Hom}}
\def\ind{\operatorname{ind}}
\def\phi{\varphi}
\def\g{\mathfrak g}
\def\m{\mathfrak m}
\def\V{\mathfrak V}
\def\F{\mathbb F}
\def\N{\mathbb N}
\numberwithin{equation}{section}
\newtheorem{Theorem}{Theorem}[section]
\newtheorem{Lemma}[Theorem]{Lemma}
\newtheorem{Proposition}[Theorem]{Proposition}
 { \theoremstyle{definition}
\newtheorem{Remark}[Theorem]{Remark} }
\begin{document}

\allowdisplaybreaks

\newcommand{\arXivNumber}{1901.07532}

\renewcommand{\thefootnote}{}

\renewcommand{\PaperNumber}{095}

\FirstPageHeading

\ShortArticleName{Cohomology of Restricted Filiform Lie Algebras ${\mathfrak m}_2^\lambda(p)$}

\ArticleName{Cohomology of Restricted Filiform Lie Algebras $\boldsymbol{{\mathfrak m}_2^\lambda(p)}$\footnote{This paper is a~contribution to the Special Issue on Algebra, Topology, and Dynamics in Interaction in honor of Dmitry Fuchs. The full collection is available at \href{https://www.emis.de/journals/SIGMA/Fuchs.html}{https://www.emis.de/journals/SIGMA/Fuchs.html}}}

\Author{Tyler J.~EVANS~$^\dag$ and Alice FIALOWSKI~$^{\ddag\S}$}

\AuthorNameForHeading{T.J.~Evans and A.~Fialowski}

\Address{$^\dag$~Department of Mathematics, Humboldt State University, Arcata, CA 95521, USA}
\EmailD{\href{mailto:evans@humboldt.edu}{evans@humboldt.edu}}
\URLaddressD{\url{https://sites.google.com/humboldt.edu/tylerjevans}}

\Address{$^\ddag$~Institute of Mathematics, University of P\'ecs, P\'ecs, Hungary}
\EmailD{\href{mailto:fialowsk@ttk.pte.hu}{fialowsk@ttk.pte.hu}}

\Address{$^\S$~Institute of Mathematics E\"otv\"os Lor\'and University, Budapest, Hungary}
\EmailD{\href{mailto:fialowsk@cs.elte.hu}{fialowsk@cs.elte.hu}}

\ArticleDates{Received August 19, 2019, in final form November 24, 2019; Published online December 01, 2019}

\Abstract{For the $p$-dimensional filiform Lie algebra ${\mathfrak m}_2(p)$ over a field ${\mathbb F}$ of prime characteristic $p\ge 5$ with nonzero Lie brackets $[e_1,e_i] = e_{i+1}$ for $1<i<p$ and $[e_2,e_i]=e_{i+2}$ for $2<i<p-1$, we show that there is a family ${\mathfrak m}_2^{\lambda}(p)$ of restricted Lie algebra structures parameterized by elements $\lambda \in {\mathbb F}^p$. We explicitly describe bases for the ordinary and restricted 1- and 2-cohomology spaces with trivial coefficients, and give formulas for the bracket and $[p]$-operations in the corresponding restricted one-dimensional central extensions.}

\Keywords{restricted Lie algebra; central extension; cohomology; filiform Lie algebra}

\Classification{17B50; 17B56}

\begin{flushright}\begin{minipage}{70mm}
\it We dedicate this paper to Dmitry B.~Fuchs\\ on the occasion of his 80th birthday
\end{minipage}
\end{flushright}

\renewcommand{\thefootnote}{\arabic{footnote}}
\setcounter{footnote}{0}

\section{Introduction}
$\N$-graded Lie algebras of maximal class have been intensively studied in the last decade. A Lie algebra of \emph{maximal class} is
a graded Lie algebra
\[
 \g=\oplus_{i=1}^{\infty}\g_i
\]
over a field $\F$, where $\dim(\g_1)=\dim(\g_2)=1$, dim$(\g_i) \leq 1$
for $i \ge 3$ and $[\g_1,\g_i]= \g_{i+1}$ for $i \ge 1$.

A Lie algebra of dimension $n$ is called \emph{filiform} if
\[\dim\big(\g^k\big)=n-k,\qquad 2 \le k \le n,\qquad {\rm where} \quad \g^k=\big[\g,\g^{k-1}\big].\]

Lie algebras of maximal class with two generators over fields of characteristic zero have been classified, and exactly three of these algebras are of filiform type \cite{Fialowski1983}. We list them with the nontrivial bracket structures:
\begin{alignat*}{4}
& \m_0\colon \quad&& [e_1 ,e_i]= e_{i+1}, \qquad && i \ge 2,&\\
& \m_2\colon \quad&& [e_1, e_i]= e_{i+1}, \qquad && i \ge 2,&\\
&&& [e_2, e_j] = e_{j+2}, \qquad && j \ge 3, &\\
& \V\colon \quad && [e_i, e_j] = (j-i)e_{i+j}, \qquad && i, j \ge 1.&
\end{alignat*}

Filiform $\N$-graded Lie algebras $\g$ of dimension $n$ over a field of characteristic zero that satisfy $[\g_1,\g_i]=\g_{i+1}$ and $\dim(\g_i)=1$ for $i<n$ (which is equivalent to having 2 generators) are classified in~\cite{Millionschikov2004}. They include the natural ``truncations'' of $\m_0(n)$ and $\m_2(n)$ obtained by taking the quotient by the ideal generated by $e_{n+1}$. The algebra $\V$ (the Witt algebra) is isomorphic to the algebra of derivations of the polynomial algebra $\F[x]$. If $\F$ has characteristic $p>0$, then the truncation $\V(p)$ of $\V$ is the derivation algebra of the quotient of $\F[x]$ by the ideal generated by $x^p-1$. The algebra $\V(p)$ is called the (modular) Witt algebra.

The above picture is more complicated in the modular case (that is,
over fields of positive characteristic), see \cite{CMN1997, CN2000, Jurman2005}, but
$\m_0$, $\m_2$, $\V$ and their truncations always show up. We refer the
reader to the book~\cite{SF} for a general treatment of modular Lie
algebras. In this paper, we show that if the field $\F$ has
characteristic $p\ge 5$, then the Lie algebra $\m_2(p)$ admits a
family of restricted Lie algebra structures $\m_2^{\lambda}(p)$
parameterized by elements $\lambda \in \F^p$. We describe the
isomorphism classes of these algebras, calculate the ordinary and restricted cohomology spaces with trivial coefficients $H^q\big(\m_2^{\lambda}(p)\big)$ and $H_*^q\big(\m_2^{\lambda}(p)\big)$ for $q=1,2$ and give explicit bases for those spaces. We also give the bracket
structures and $[p]$-operations for the corresponding restricted
one-dimensional central extensions of these restricted Lie algebras.

With this, we complete the description of all three types of truncated filiform restricted Lie algebras ($\m_0^\lambda(p)$, $\m_2^\lambda(p)$, and $\V(p)$), their low dimensional cohomology spaces with trivial coefficients and their restricted one-dimensional central extensions. The algebras $\m_0^{\lambda}(p)$ were studied in~\cite{EvFi2019}, and the algebra $\V(p)$ was studied in~\cite{EvFiPe2016} (where it is denoted by~$W$).

\begin{Remark}
 For $p=2$ and $p=3$, $\m_0(p)=\m_2(p)$ so these algebras were
 treated in \cite{EvFi2019}.
\end{Remark}

In this paper, all cochain and cohomology spaces are with coefficients in the trivial $\F$-module. The organization is as follows. In Section~\ref{section2} we construct the restricted Lie algebra family~$\m_2^{\lambda}(p)$, determine the isomorphism classes of these restricted Lie algebras, and describe both the ordinary and restricted 1- and 2-cochains, including formulas for all differentials. In Section~\ref{section3} we calculate both the ordinary and restricted 1-cohomology by giving explicit cocycles. Section~\ref{section4} contains the calculation of the ordinary and restricted 2-cohomology spaces, again by giving explicit cocycles. In Section~\ref{section5} we describe all restricted one-dimensional central extensions and give their brackets and $[p]$-operations.

\section{Preliminaries}\label{section2}

\subsection[The Lie algebra $\m_2(p)$]{The Lie algebra $\boldsymbol{\m_2(p)}$}\label{section2.1}

Let $p\ge 5$ be a prime, and let $\F$ be a field of characteristic
$p$. Define the $\F$-vector space
\[\m_2(p)=\spn\nolimits_\F(\{e_1,\dots , e_p\}), \]
and define a bracket on $\m_2(p)$ by \
\begin{gather*}
 [e_1,e_i] =e_{i+1},\qquad 1<i < p,\\
 [e_2,e_i] =e_{i+2},\qquad 2<i < p-1,
\end{gather*}
with all other brackets $[e_i,e_j]$ (for $i<j$) being $0$. Note that
$\m_2(p)$ is a graded Lie algebra with $k$-th graded component
$(\m_2(p))_k=\F e_k$ for $1\le k\le p$. If $\alpha_i, \beta_i \in \F$
and $g=\sum_{i=1}^p \alpha_ie_i$, $h=\sum_{i=1}^p \beta_ie_i$, then
\begin{gather} [g,h] =(\alpha_1\beta_2-\alpha_2\beta_1)e_3+(\alpha_1\beta_3-\alpha_3\beta_1)e_4 \nonumber\\
\hphantom{[g,h] =}{} +\sum_{j=5}^p ((\alpha_1\beta_{j-1}-\alpha_{j-1}\beta_1)+(\alpha_2\beta_{j-2}-\alpha_{j-2}\beta_2)) e_j.\label{bracket}
\end{gather}

\subsection[The restricted Lie algebras $\m_2^\lambda(p)$]{The restricted Lie algebras $\boldsymbol{\m_2^\lambda(p)}$}

We refer the reader to \cite[Chapter~V, Section~7]{JacobsonBook} and \cite[Section~2.2]{SF} for the definition of a~restricted Lie algebra,
and for the construction of the $[p]$-mapping on a given Lie algebra
($\m_2(p)$ in the current paper) used below. For any $j\ge 2$ and
$g_1,\dots, g_j\in\m_2(p)$, we denote the $j$-fold bracket
\[[g_1,g_2,g_3,\dots, g_j]=[[\dots[[g_1,g_2],g_3],\dots],g_j].\] Since
$p\ge 5$, (\ref{bracket}) implies that the center of the algebra is
$Z(\m_2(p))= \F e_p,$ and $p$-fold brackets are zero. Therefore for
each $\lambda=(\lambda_1, \dots , \lambda_p)\in\F^p$, setting
$e_k^{[p]}=\lambda_k e_p$ for each $k$ defines a~restricted Lie
algebra that we denote by $\m_2^\lambda(p)$. Because $p$-fold
brackets in $\m_2^\lambda(p)$ are zero, for all
$g,h\in\m_2^\lambda(p)$, $\alpha\in\F$,
\[(g+h)^{[p]}=g^{[p]}+h^{[p]}\qquad {\rm and}\qquad (\alpha g)^{[p]}=\alpha^p g^{[p]},\] and therefore the $[p]$-mapping on
$\m_2^\lambda(p)$ is $p$-semilinear (see also \cite[Chapter~2, Lemma~1.2]{SF}). From this we get that if
$g=\sum \alpha_k e_k\in\m_2^\lambda(p)$, then
\begin{eqnarray}\label{p-op} g^{[p]}=\left (\sum_{k=1}^p \alpha_k^p\lambda_k \right ) e_p.
\end{eqnarray}
Everywhere below, we write $\m_2^\lambda(p)$ to denote both the graded
Lie algebra $\m_2(p)$ and the graded restricted Lie algebra
$\m_2^\lambda(p)$ for a given $\lambda\in\F^p$. The Lie brackets and
restricted $[p]$-operators for these algebras are explicitly given by~(\ref{bracket}) and~(\ref{p-op}), respectively.

\begin{Remark} For $p=2$ there are several other possible $[2]$-mappings, namely any 2-semilinear transformation on $\m_2(2)$.
\end{Remark}

\subsection{Isomorphism classes}

\begin{Proposition} Let $p\geq 5$. If $\lambda,\lambda'\in\F^p$, the graded restricted Lie algebras~$\m_2^{\lambda}(p)$ and~$\m_2^{\lambda'}(p)$ are isomorphic if and only if there exists a non-zero $\mu\in\F$ such that $\lambda_k=\mu^{(k-1)p}\lambda'_k$ for $k=1,\dots,p$.
\end{Proposition}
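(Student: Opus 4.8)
The plan is to exploit the one-dimensionality of each graded component. Since we seek a \emph{graded} restricted isomorphism $\phi\colon\m_2^\lambda(p)\to\m_2^{\lambda'}(p)$, it must preserve degree, and because $(\m_2(p))_k=\F e_k$ is one-dimensional, $\phi$ is forced to be diagonal: $\phi(e_k)=c_k e_k$ for nonzero scalars $c_1,\dots,c_p\in\F$. The whole problem then reduces to determining which diagonal maps are simultaneously Lie homomorphisms and restricted homomorphisms, and translating those constraints into relations between $\lambda$ and $\lambda'$.

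First I would impose that $\phi$ respects the bracket (\ref{bracket}). Applying $\phi$ to $[e_1,e_i]=e_{i+1}$ for $1<i<p$ yields the recurrence $c_{i+1}=c_1c_i$, hence $c_k=c_1^{k-2}c_2$ for $2\le k\le p$. Next, applying $\phi$ to the second family $[e_2,e_i]=e_{i+2}$ for $2<i<p-1$ gives $c_{i+2}=c_2c_i$; substituting the previous formula and cancelling the nonzero factor $c_1^{i-2}c_2$ forces $c_2=c_1^2$. This is precisely the step that uses $p\ge 5$: the index range $2<i<p-1$ is nonempty exactly when $p\ge 5$, so the constraint genuinely appears (consistent with the Remark that $\m_0(p)=\m_2(p)$ for small $p$). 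Writing $\mu:=c_1\ne 0$, the two bracket conditions together are equivalent to $c_k=\mu^k$ for all $k$.

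It remains to read off the restricted condition. Since the $[p]$-map is $p$-semilinear and given by (\ref{p-op}), the identity $\phi\big(g^{[p]}\big)=\phi(g)^{[p]}$ need only be checked on the basis; for $g=e_k$ it reads $\lambda_k c_p\,e_p=c_k^{p}\lambda'_k\,e_p$, i.e.\ $\lambda_k\mu^p=\mu^{kp}\lambda'_k$, which simplifies to $\lambda_k=\mu^{(k-1)p}\lambda'_k$ for each $k$. This establishes the ``only if'' direction with the stated $\mu$. For the converse, given a nonzero $\mu$ with $\lambda_k=\mu^{(k-1)p}\lambda'_k$, I would simply define $\phi(e_k)=\mu^k e_k$ and verify directly that it is a graded linear isomorphism preserving both (\ref{bracket}) and (\ref{p-op}); the computations are exactly the ones above run in reverse.

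The routine part is the chain of recurrences; the one point demanding care is the interplay between the two bracket families, since using only $[e_1,e_i]$ leaves a spurious two-parameter family $(c_1,c_2)$ and one must invoke $[e_2,e_i]$ to collapse it to the single parameter $\mu$. The other place to be careful is the appearance of $p$-th powers in the restricted condition, which is what produces the exponent $(k-1)p$ rather than $k-1$ in the final relation.
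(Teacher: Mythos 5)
Your proposal is correct and follows essentially the same route as the paper: a graded isomorphism must be diagonal, the two bracket families force $\phi(e_k)=\mu^k e_k$ (the paper extracts the constraint $\nu=\mu^2$ from the single coincidence $[e_1,e_4]=[e_2,e_3]=e_5$, which is just the $i=3$ instance of your recurrence $c_{i+2}=c_2c_i$), and the $[p]$-condition on basis vectors then yields $\lambda_k=\mu^{(k-1)p}\lambda'_k$, with the converse obtained by reversing the computation. Your explicit remark that the range $2<i<p-1$ is nonempty precisely when $p\ge 5$ is a nice touch that the paper leaves implicit.
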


\begin{proof} We only consider isomorphisms that preserve the grading as we are
 interested in these algebras as graded restricted Lie
 algebras. Assume that there exists a graded restricted Lie algebra
 isomorphism $\phi\colon \m_2^{\lambda}(p) \to \m_2^{\lambda'}(p)$, and
 let $\phi(e_1)=\mu e_1, \phi(e_2)=\nu e_2$ for some non-zero
 $\mu, \nu \in \F$. Since $\phi$ preserves the Lie bracket, we must
 have $\phi(e_3)=\mu\nu e_3$, $\phi(e_4)=\mu^2\nu e_4$,
 $\phi(e_5)=\mu^3\nu e_5$. On the other hand, as
 $[e_1, e_4] = [e_2,e_3]$, we also must have
 $\phi(e_5)=\mu\nu^2 e_5$. From this it follows that $\nu=\mu^2$ and
 $\phi(e_k)=\mu^k e_k$ for $k=1,\dots, p$.

 Moreover, $\phi$ preserves the restricted $[p]$-structure so that
 \[
 \phi(e_k^{[p]})=\phi(e_k)^{[p]'}
 \]
 for $k=1,\dots, p$ (here $[p]'$ denotes the restricted $[p]$-structure on
 $\m_2^{\lambda'}(p)$). Now,
 \[
 \phi(e_k^{[p]})=\phi(\lambda_k e_p)=\lambda_k\mu^p e_p\qquad {\rm and}\qquad \phi(e_k)^{[p]'}=(\mu^k e_k)^{[p]'}=\mu^{kp}\lambda_k' e_p
 \]
 so $\lambda_k\mu^p=\mu^{kp}\lambda'_k,$ and hence
 \[
 \lambda_k=\mu^{(k-1)p}\lambda'_k.
 \]

It remains to show that the above condition on $\lambda_k$ gives rise to a graded restricted Lie algebra isomorphism between $\m_2^{\lambda}(p)$ and $\m_2^{\lambda'}(p)$. If, for $0\ne \mu\in\F$, we define $\phi(e_1)=\mu e_1$, $\phi(e_k)=\mu^k e_k$ $(2\le k\le p)$, then it is easy to check that the argument above is reversible, and we obtain a~graded isomorphism between the restricted Lie algebras.
\end{proof}

\subsection{Cochain complexes with trivial coefficients}

For the convenience of the reader and to establish our notations, we
briefly recall the definitions of the cochain spaces used below to
compute both the ordinary and restricted Lie algebra 1- and
2-cohomology. The reader can find more details on these complexes in
\cite{ChevalleyEilenberg1948,EvFi2017,EvFi2019,FuchsBook,Hochschild1954}.

\subsubsection{Ordinary cochain complex}

For ordinary Lie algebra cohomology with trivial coefficients, the relevant cochain spaces from the Chevalley--Eilenberg complex (with bases) for our purposes are
\begin{alignat*}{3}
& C^0\big(\m_2^\lambda(p)\big)= \F, \qquad && \{1\},&\\
& C^1\big(\m_2^\lambda(p)\big)= \m_2^\lambda(p)', \qquad && \big\{e^k\, |\, 1\le k\le p\big\},&\\
& C^2\big(\m_2^\lambda(p)\big)= \big({\wedge}^2\m_2^\lambda(p)\big)',\qquad && \big\{e^{i,j}\, |\, 1\le i<j\le p\big\},&\\
& C^3\big(\m_2^\lambda(p)\big)= \big({\wedge}^3\m_2^\lambda(p)\big)', \qquad && \big\{e^{s,t,u}\, |\, 1\le s<t<u\le p\big\},&
\end{alignat*}
($V'$ denotes the dual vector space) and the differentials are defined by
\begin{alignat*}{3}
& {\rm d}^0\colon \ C^0\big(\m_2^\lambda(p)\big)\to C^1\big(\m_2^\lambda(p)\big), \qquad && {\rm d}^0=0,&\\
& {\rm d}^1\colon \ C^1\big(\m_2^\lambda(p)\big)\to C^2\big(\m_2^\lambda(p)\big), \qquad && {\rm d}^1(\psi)(g,h)=\psi([g,h]),&\\
& {\rm d}^2\colon \ C^2\big(\m_2^\lambda(p)\big)\to C^3\big(\m_2^\lambda(p)\big), \qquad && {\rm d}^2(\phi)(g,h,f)=\phi([g,h]\wedge f)-\phi([g,f]\wedge h)&\\
&&& \hphantom{{\rm d}^2(\phi)(g,h,f)=}{} +\phi([h,f]\wedge g).&
\end{alignat*}
The cochain spaces $C^n\big(\m_2^\lambda(p)\big)$ are graded
\begin{alignat*}{3}
& C^1_k\big(\m_2^\lambda(p)\big)=\spn\big(\big\{e^k\big\}\big),\qquad && 1\le k\le p, &\\
& C^2_k\big(\m_2^\lambda(p)\big)=\spn\big(\big\{e^{i,j}\big\}\big),\qquad && 1\le i<j \le p,\quad i+j=k, \quad 3\le k\le 2p-1, &\\
& C^3_k\big(\m_2^\lambda(p)\big)=\spn\big(\big\{e^{s,t,u}\big\}\big),\qquad && 1\le s<t<u \le p, \quad s+t+u=k, \quad 6\le k\le 3p-3,&
\end{alignat*}
and the differentials are graded maps. If we adopt the convention that $e^{i,j}=0$ whenever $j\le i$, we can write for $1\le k\le p$
\begin{gather}\label{1-coboundary}
 {\rm d}^1\big(e^k\big)=e^{1,k-1}+e^{2,k-2}.
\end{gather}

Using the convention that $e^{i,j,k}=0$ unless $i<j<k$, we can write
\begin{gather}
 {\rm d}^2\big(e^{1,j}\big) = -e^{1,2,j-2}, \quad 2\le j,\nonumber\\
 {\rm d}^2\big(e^{i,j}\big) =e^{1,i-1,j}+e^{1,i,j-1}+e^{2,i-2,j}+e^{2,i,j-2}, \qquad 2\le i<j\le p. \label{2-coboundary}
\end{gather}

\subsubsection{Restricted cochain complex}

The relevant restricted cochain spaces are
\begin{gather*}
 C^0_*\big(\m_2^\lambda(p)\big)= C^0\big(\m_2^\lambda(p)\big),\\
 C^1_*\big(\m_2^\lambda(p)\big)= C^1\big(\m_2^\lambda(p)\big),\\
 C^2_*\big(\m_2^\lambda(p)\big)=\big\{(\phi,\omega)\, |\, \phi\in C^2(\m_2^\lambda(p)),\, \omega\colon \m_2^\lambda(p)\to\F\\
\hphantom{C^2_*\big(\m_2^\lambda(p)\big)=\big\{}{} \text{\rm has the $*$-property with respect to $\phi$}\big\}\\
 C^3_*\big(\m_2^\lambda(p)\big)=\big\{(\zeta,\eta)\, |\, \zeta\in C^3(\m_2^\lambda(p)),\, \eta\colon \m_2^\lambda(p)\times\m_2^\lambda(p)\to\F \big\}.
\end{gather*}

We recall that if $\phi\in C^2\big(\m_2^\lambda(p)\big)$, then a map $\omega\colon \m_2^\lambda(p)\to\F$ has the {\it $*$-property with respect to $\phi$} if for all $\alpha\in\F$ and all $g,h\in \m_2^\lambda(p)$ we have $\omega(\alpha g)=\alpha^p\omega(g)$ and
\begin{gather} \label{starprop}
 \omega (g+h)=\omega (g)+ \omega (h) + \sum_{\substack{g_i=g\ {\rm or}\ h\\ g_1=g,\, g_2=h}} \frac{1}{\#(g)} \phi([g_1,g_2,\dots, g_{p-1}]\wedge g_p).
\end{gather} Here $\#(g)$ is the number of factors $g_i$ equal to $g$. Moreover, given $\phi$, we can assign the values of $\omega$ arbitrarily on a basis for $\m_2^\lambda(p)$ and use~(\ref{starprop}) to define $\omega\colon \m_2^\lambda(p)\to\F$ that has the $*$-property with respect to $\phi$ (see \cite[pp.~249--250]{EvFi2019}). Recall the space of {\it Frobenius homomorphisms} $\Hom_{\rm Fr}(V,W)$ from the $\F$-vector space $V$ to the $\F$-vector space $W$ is defined by
\[ \Hom_{\rm Fr}(V,W) = \big\{ f\colon V\to W\, |\, f(\alpha x+ \beta y) = \alpha^p f(x) + \beta^p f(y)\big\}\] for all $\alpha,\beta\in\F$ and
$x,y\in V$. A map $\omega\colon \m_2^\lambda(p)\to\F$ has the $*$-property with respect to $\phi=0$ if and only if $\omega\in\Hom_{\rm Fr}(\m_2^\lambda(p),\F)$. In particular, if
$1\le k\le p$, then the map $\overline e^k\colon \m_2^\lambda(p)\to\F$ defined by
\[\overline e^k\left (\sum_{i=1}^p \alpha_ie_i\right ) = \alpha_k^p,\]
has the $*$-property with respect to $0$.

We will use the following bases for the restricted cochains
\begin{alignat*}{3}
& C^0_*\big(\m_2^\lambda(p)\big) \qquad && \{1\},&\\
& C^1_*\big(\m_2^\lambda(p)\big)\qquad & & \big\{e^k\, |\, 1\le k\le p\big\},&\\
& C^2_*\big(\m_2^\lambda(p)\big)\qquad & & \big\{\big(e^{i,j},\tilde e^{i,j}\big)\, |\, 1\le i<j\le p\big\}\cup \big\{\big(0,\overline e^k\big)\, |\, 1\le k\le p\big\},&
\end{alignat*}
where $\tilde e^{i,j}$ is the map $\tilde e^{i,j}\colon \m_2^\lambda(p)\to \F$ that vanishes on the basis and has the $*$-property with respect to $e^{i,j}$. More generally, given $\phi\in C^2(\m_2^\lambda(p))$, let $\tilde\phi\colon \m_2^\lambda(p) \to\F$ be the map that vanishes on the basis for $\m_2^\lambda(p) $ and has the $*$-property with respect to~$\phi$. The restricted differentials are defined by
\begin{alignat*}{3}
& {\rm d}^0_*\colon \ C^0_*\big(\m_2^\lambda(p)\big)\to C^1_*\big(\m_2^\lambda(p)\big) \qquad && {\rm d}^0_*=0,&\\
& {\rm d}^1_*\colon \ C^1_*\big(\m_2^\lambda(p)\big)\to C^2_*\big(\m_2^\lambda(p)\big) \qquad && {\rm d}^1_*(\psi)=\big({\rm d}^1(\psi),\ind^1(\psi)\big),&\\
& {\rm d}^2_*\colon \ C^2_*\big(\m_2^\lambda(p)\big)\to C^3_*\big(\m_2^\lambda(p)\big) \qquad && {\rm d}^2_*(\phi,\omega)=\big({\rm d}^2(\phi),\ind^2(\phi,\omega)\big),&
\end{alignat*}
where $\ind^1(\psi)(g) := \psi\big(g^{[p]}\big)$ and $\ind^2(\phi,\omega)(g,h) := \phi\big(g\wedge h^{[p]}\big)$. If $\psi\in C^1_*\big(\m_2^\lambda(p)\big)$ and $(\phi,\omega)\in C^2_*\big(\m_2^\lambda(p)\big)$, then $\ind^1(\psi)$ has the $*$-property with respect to~${\rm d}^1(\psi)$ \cite[Lemma~4]{EvansFuchs2008}. If $g=\sum\alpha_i e_i$, $h=\sum\beta_i e_i$, $\psi=\sum \mu_ie^i$ and $\phi=\sum \sigma_{ij}e^{i,j}$, then
\begin{gather*}
 \ind^1(\psi)(g) = \mu_p \left (\sum_{j=1}^p \alpha_j^p\lambda_j \right )
\end{gather*}
and
\begin{gather}\label{induced2}
 \ind^2(\phi,\omega)(g,h) = \left (\sum_{i=1}^p \beta_i^p\lambda_i \right )\left (\sum_{j=1}^{p-1} \alpha_j\sigma_{jp}\right).
\end{gather}

\begin{Remark}\label{changecomp2}
 For a given $\phi\in C^2\big(\m_2^\lambda(p)\big)$, if $(\phi,\omega),(\phi,\omega')\in C^2_*\big(\m_2^\lambda(p)\big)$, then ${\rm d}^2_*(\phi,\omega)={\rm d}^2_*(\phi,\omega')$. In particular, with trivial coefficients, $\ind^2(\phi,\omega)$ depends only on $\phi$.
\end{Remark}

\section[The cohomology $H^1\big(\m_2^\lambda(p)\big)$ and $H^1_*\big(\m_2^\lambda(p)\big)$]{The cohomology $\boldsymbol{H^1\big(\m_2^\lambda(p)\big)}$ and $\boldsymbol{H^1_*\big(\m_2^\lambda(p)\big)}$}\label{section3}

In this short section we compute, for $p\ge 5$, both the ordinary and restricted 1-cohomology spaces $H^1(\m_2^\lambda(p))$ and $H^1_*(\m_2^\lambda(p))$, and in particular we show that these spaces are equal.

\begin{Theorem}\label{1-coho}
If $p\ge 5$ and ${\bf \lambda} \in \F^p$, then \[H^1\big(\m_2^\lambda(p)\big)=H^1_*\big(\m_2^\lambda(p)\big)\] and the classes of $\big\{e^1,e^2\big\}$ form a basis.
\end{Theorem}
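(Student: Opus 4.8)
The plan is to read off $Z^1 = \ker {\rm d}^1$ directly from the explicit coboundary formula~(\ref{1-coboundary}), to note that $B^1 = 0$ since ${\rm d}^0 = 0$, and finally to verify that passing to the restricted complex imposes no additional constraint on the cocycles.

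First I would evaluate ${\rm d}^1(e^k) = e^{1,k-1} + e^{2,k-2}$ for each $k$, using the convention that $e^{i,j} = 0$ unless $i < j$. For $k = 1, 2$ both terms have a nonpositive or repeated lower index, so ${\rm d}^1(e^1) = {\rm d}^1(e^2) = 0$; for $k = 3, 4$ the second term drops out and ${\rm d}^1(e^k) = e^{1,k-1} \ne 0$; for $5 \le k \le p$ both terms survive and ${\rm d}^1(e^k) = e^{1,k-1} + e^{2,k-2} \ne 0$. Since ${\rm d}^1$ is a graded map, ${\rm d}^1(e^k)$ lies in the graded component $C^2_k(\m_2^\lambda(p))$, and as the degrees $k = 3, \dots, p$ are pairwise distinct, the vectors ${\rm d}^1(e^3), \dots, {\rm d}^1(e^p)$ are linearly independent. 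Hence ${\rm d}^1$ is injective on $\spn(\{e^3, \dots, e^p\})$, so $\ker {\rm d}^1 = \spn(\{e^1, e^2\})$. Because ${\rm d}^0 = 0$ forces $B^1 = 0$, this already gives $H^1(\m_2^\lambda(p)) = Z^1 = \spn(\{e^1, e^2\})$, and in particular $\{e^1, e^2\}$ is a basis.

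For the restricted cohomology, recall that $C^1_*(\m_2^\lambda(p)) = C^1(\m_2^\lambda(p))$ and ${\rm d}^0_* = 0$, so again $B^1_* = 0$ and it suffices to compute $Z^1_* = \ker {\rm d}^1_*$. A cochain $\psi = \sum_i \mu_i e^i$ is a restricted $1$-cocycle exactly when ${\rm d}^1_*(\psi) = ({\rm d}^1(\psi), \ind^1(\psi)) = 0$, i.e., when both ${\rm d}^1(\psi) = 0$ and $\ind^1(\psi) = 0$. The first equation alone places $\psi$ in $Z^1 = \spn(\{e^1, e^2\})$, which forces $\mu_p = 0$; here the hypothesis $p \ge 5$ is used, so that $e^p$ is distinct from $e^1$ and $e^2$. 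Since $\ind^1(\psi)(g) = \mu_p(\sum_j \alpha_j^p \lambda_j)$, the condition $\mu_p = 0$ makes $\ind^1(\psi)$ vanish identically, for every $\lambda \in \F^p$. Thus the second equation is automatically satisfied, $Z^1_* = Z^1$, and therefore $H^1_*(\m_2^\lambda(p)) = H^1(\m_2^\lambda(p)) = \spn(\{e^1, e^2\})$.

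The computation producing $\ker {\rm d}^1$ is routine; the one point requiring care is the final step, where a priori the extra restricted condition $\ind^1(\psi) = 0$ could shrink the cocycle space. What saves us is that every ordinary $1$-cocycle has vanishing $e^p$-coefficient, so $\ind^1$ contributes nothing regardless of the parameter $\lambda$ --- which is precisely why the ordinary and restricted $1$-cohomology coincide.
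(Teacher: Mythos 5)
Your proof is correct and follows essentially the same route as the paper: compute $\ker {\rm d}^1 = \spn\big(\big\{e^1,e^2\big\}\big)$ from~(\ref{1-coboundary}), note $B^1=0$, and then observe that every ordinary $1$-cocycle has $\mu_p=0$ (since $p\ge 5$), so $\ind^1(\psi)=0$ automatically and the restricted condition is vacuous. The only cosmetic difference is that you verify $Z^1_*=Z^1$ directly from the definition of ${\rm d}^1_*$, whereas the paper invokes the general characterization of $H^1_*$ as the ordinary classes killed by $\ind^1$ (citing Hochschild); the key computation is identical.
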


\begin{proof} Easily, the differential (\ref{1-coboundary}) has a kernel spanned by $\big\{e^1,e^2\big\} $, and $d^0=0$, so that
 \[H^1\big(\m_2^\lambda(p)\big)\cong\F e^1\oplus \F e^2.\]

As for any restricted Lie algebra, $H^1_*\big(\m_2 ^\lambda (p)\big)$ consists of those ordinary cohomology classes $[\psi]\in H^1 \big(\m_2^\lambda (p)\big)$ for which $\ind^1(\psi)=0$ (see \cite[Theorem~2.1]{Hochschild1954} or \cite[Theorem~2]{EvansFuchs2008}). If $\psi =\sum_{k=1}^p \mu_ke^k$ is any ordinary cocycle, then $\mu_p=0$ ($p\ge 5$) so that for any
 $g\in \m_2^\lambda (p)$, we have
 \[\ind^1(\psi)(g)=\psi\big(g^{[p]}\big)=\mu_p\left (\sum_{k=1}^p \alpha_k^p\lambda_k\right )=0,\] and hence $H^1_*\big(\m_2^\lambda(p)\big)=H^1\big(\m_2^\lambda(p)\big)$.
\end{proof}

\begin{Remark} An alternate proof is the following
 \[
 H^1\big(\m_2^{\lambda}(p)\big)\cong \big(\m_2^{\lambda}(p)/\big[\m_2^{\lambda}(p), \m_2^{\lambda}(p)\big]\big)',
 \]
 where $\mathfrak g'=\Hom_\F(\mathfrak g,\F)$ denotes the vector space dual of $\mathfrak g$ and
 \[
 H^1_*\big(\m_2^{\lambda}(p)\big) \cong \big(\m_2^{\lambda}(p)/\big(\big[\m_2^{\lambda}(p),\m_2^{\lambda}(p)\big]+
 \spn\big(\m_2^{\lambda}(p)^{[p]}\big)\big)\big)'
 \]
 (see \cite[Proposition~2.7]{Feldvoss1991}).

In particular, since $\spn\big(\m_2^{\lambda}(p)^{[p]}\big)\subseteq \F e_p \subseteq\big[\m_2^{\lambda}(p),\m_2^{\lambda}(p)\big]$, the ordinary and the restricted 1-cohomology spaces coincide.
\end{Remark}

\begin{Remark}\label{1-cobndbasis} For $p\ge 5$, formula (\ref{1-coboundary}) shows that for $1\le k\le p$, ${\rm d}^1(e_k)=e^{1,k-1}+e^{2,k-2}$. For $k\ge 3$, let
 \begin{gather*}
 \phi_k={\rm d}^1\big(e^k\big)=e^{1,k-1}+e^{2,k-2}.
 \end{gather*}
 The set $\{\phi_3,\phi_4,\dots, \phi_p\}$ is a basis for the image ${\rm d}^1\big(C^1\big(\m_2^\lambda(p)\big)\big)$.
\end{Remark}

\section[The cohomology $H^2\big(\m_2^\lambda(p)\big)$ and $H^2_*\big(\m_2^\lambda(p)\big)$]{The cohomology $\boldsymbol{H^2\big(\m_2^\lambda(p)\big)}$ and $\boldsymbol{H^2_*\big(\m_2^\lambda(p)\big)}$}\label{section4}

\subsection{Ordinary cohomology}\label{section4.1}

\begin{Lemma}\label{ordinary2kernel} Let $p>5$. For $3\le k\le 2p-1$, let ${\rm d}^2_k\colon C^2_k\big(\m_2^\lambda(p)\big)\to C^3_k\big(\m_2^\lambda(p)\big)$ denote the
restriction of the differential ${\rm d}^2\colon C^2\big(\m_2^\lambda(p)\big)\to C^3\big(\m_2^\lambda(p)\big)$ to the $k$th graded component $C^2_k\big(\m_2^\lambda(p)\big)$ of $C^2\big(\m_2^\lambda(p)\big)$. Then $\ker \big({\rm d}^2_k\big)=0$ for $k\ge p+2$, and for $3\le k\le p+1$, basis elements of $\ker \big({\rm d}^2_k\big)$ are listed in
 the following table:
\begin{table}[h!]\centering
 \begin{tabular}{|l|l|} \hline
 $k=3$ & $\phi_3=e^{1,2}$ \tsep{1pt}\\
 \hline
 $k=4$ & $\phi_4=e^{1,3}$\tsep{1pt}\\
 \hline
 $k=5$ & $e^{1,4}$, $ e^{2,3}$\tsep{1pt}\\
 \hline
 $k=6$ & $\phi_6=e^{1,5}+e^{2,4}$\tsep{1pt}\\
 \hline
 $k=7$& $\phi_7=e^{1,6}+e^{2,5}$, $e^{1,6}+e^{3,4}$\tsep{1pt}\\
 \hline
$ 8\le k\le p+1$ & $\phi_k=e^{1,k-1}+e^{2,k-2}$\tsep{1pt}\\
 \hline
 \end{tabular}
\end{table}
\end{Lemma}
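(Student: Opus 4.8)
The plan is to use the grading. Since ${\rm d}^2$ is a graded map, $\ker({\rm d}^2)=\bigoplus_k\ker({\rm d}^2_k)$, so it suffices to treat each graded piece $C^2_k\big(\m_2^\lambda(p)\big)$ on its own. Fixing $k$, I would write a general $k$-cochain as $\phi=\sum_i x_i\,e^{i,k-i}$, the sum running over all admissible pairs $(i,k-i)$ with $1\le i<k-i\le p$, adopting the convention that $x_i=0$ whenever $(i,k-i)$ is not admissible. Feeding each $e^{i,k-i}$ through (\ref{2-coboundary}) and collecting, for every ordered triple $e^{s,t,u}$ with $s+t+u=k$, its coefficient in ${\rm d}^2(\phi)$ converts ${\rm d}^2(\phi)=0$ into a homogeneous linear system in the $x_i$. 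Here I would use the reduced forms ${\rm d}^2\big(e^{1,j}\big)=-e^{1,2,j-2}$ and ${\rm d}^2\big(e^{2,j}\big)=e^{1,2,j-1}$ together with the four-term formula for $i\ge 3$, always reordering indices by antisymmetry.

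Organizing the target triples by shape, I expect three families of relations. The triple $(1,2,k-3)$ gives $-x_1+x_2=0$; the one systematic check here is that the two contributions of $e^{3,k-3}$ to this triple, coming from the slots $e^{1,2,k-3}$ and $e^{2,1,k-3}=-e^{1,2,k-3}$, cancel, so no $x_3$ survives. A triple $(1,t,u)$ with $3\le t<u$ gives $x_t+x_{t+1}=0$, the term $x_{t+1}$ dropping out when $u=t+1$ (so that the relation becomes $x_t=0$). The triples $(2,t,u)$ and $(s,t,u)$ with $s\ge 3$ give the analogous relations among the higher $x_i$; the feature I would isolate is that a consecutive triple $(2,t,t+1)$ carries a sign and yields $x_t-x_{t+1}=0$, to be paired with the relation $x_t+x_{t+1}=0$ produced by a nearby type-$(1,t,u)$ triple.

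Solving is then a propagation argument. For $8\le k\le p+1$ the system has a seed forcing a variable to vanish: when $k$ is even the consecutive triple $(1,k/2-1,k/2)$ gives $x_{k/2-1}=0$, while when $k$ is odd the two relations from $(1,(k-3)/2,(k+1)/2)$ and $(2,(k-3)/2,(k-1)/2)$ force $x_{(k-3)/2}=x_{(k-1)/2}=0$. Feeding this seed into the recursion $x_t=-x_{t+1}$ kills $x_i$ for every $i\ge 3$, leaving only the one-parameter freedom $x_1=x_2$, so $\ker({\rm d}^2_k)=\F\,\phi_k$ with $\phi_k=e^{1,k-1}+e^{2,k-2}$; that $\phi_k$ lies in the kernel is automatic from ${\rm d}^2{\rm d}^1=0$ for $3\le k\le p$ (Remark~\ref{1-cobndbasis}) and needs only a one-line direct check at $k=p+1$. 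For $k\ge p+2$ the pair $(1,k-1)$ is inadmissible, so $x_1=0$; the relation from $(1,2,k-3)$ now reads $x_2=0$, and the same propagation forces every $x_i=0$, giving $\ker({\rm d}^2_k)=0$. The small values $3\le k\le 7$ I would simply inspect: there are too few admissible triples to impose the full set of relations, which is exactly why these kernels are larger (two-dimensional at $k=5$ and $k=7$) and must be tabulated separately.

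I expect the main obstacle to be bookkeeping rather than anything conceptual: tracking the antisymmetry signs when the four raw index-triples in (\ref{2-coboundary}) are reordered into the basis, and correctly handling the boundary degeneracies (indices that coincide, drop to $0$, or exceed $p$) that make individual terms vanish. These degeneracies are precisely what separates the generic range $8\le k\le p+1$ from the exceptional small-$k$ behaviour and from the vanishing at $k\ge p+2$, so the care must go into confirming that in each range both the vanishing seed and the propagating recursion are genuinely present.
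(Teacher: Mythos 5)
Your overall strategy---decompose by degree, turn ${\rm d}^2_k(\phi)=0$ into a linear system via (\ref{2-coboundary}), locate a vanishing seed and propagate---is exactly the paper's. But your handling of the formal symbols fails at one decisive point: you ``reorder indices by antisymmetry,'' whereas the convention stated before (\ref{2-coboundary}) is that $e^{s,t,u}=0$ unless $s<t<u$. The terms of (\ref{2-coboundary}) are not components of an antisymmetrized tensor; each one records a bracket evaluation on an \emph{ordered} triple, and the out-of-order symbol $e^{2,1,j}$ appearing in ${\rm d}^2\big(e^{3,j}\big)$ corresponds to the same evaluation $[e_1,e_2]=e_3$ that already produced the term $e^{1,2,j}$. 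Reordering it to $-e^{1,2,j}$ double-counts that evaluation with opposite sign and cancels a genuine contribution. A direct check settles the point: ${\rm d}^2\big(e^{3,j}\big)(e_1,e_2,e_j)=e^{3,j}([e_1,e_2]\wedge e_j)=1$, so $e^{3,k-3}$ contributes $+x_3$ to the coefficient of $e^{1,2,k-3}$, and the correct relation is $-x_1+x_2+x_3=0$ (the first equation of the paper's system (\ref{system2})), not $-x_1+x_2=0$.

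This error is not cosmetic. At $k=7$ your rule gives ${\rm d}^2\big(e^{3,4}\big)=0$, making $e^{3,4}$ a cocycle and $e^{1,6}+e^{3,4}$ a non-cocycle---the opposite of the table you are proving (directly, ${\rm d}^2\big(e^{3,4}\big)(e_1,e_2,e_4)=1\neq 0$); so the ``simple inspection'' you defer to for small $k$ would, carried out with your sign rule, produce a wrong table. In the range $8\le k\le p+1$ you land on the correct kernel only by accident, because $x_3=0$ happens to be forced by the other equations before the $(1,2,k-3)$ relation is invoked. For $k\ge p+2$ your seed fails for a second reason as well: the triple $(1,2,k-3)$ exists only when $k\le p+3$, and even at $k=p+2$ the correct relation reads $x_2+x_3=0$ rather than $x_2=0$; the vanishing of all $x_i$ must instead come from the middle seed (the consecutive triples) plus propagation, which is how the paper argues. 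One remark in your favour: your sign at $(2,t,t+1)$, giving $x_t-x_{t+1}=0$, is in fact correct---it is a term that a blanket ``zero unless ordered'' reading of (\ref{2-coboundary}) would itself miss---which shows that neither blanket reindexing rule is reliable; the only safe procedure is to evaluate ${\rm d}^2\big(e^{i,j}\big)$ on ordered triples of basis vectors directly. (Minor additional slip: triples $(s,t,u)$ with $s\ge 3$ impose no relations at all, since all brackets among $e_s$, $e_t$, $e_u$ vanish.)
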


\begin{proof} A direct calculation using (\ref{2-coboundary}) shows that $d^2_k=0$ for $k=3,4,5$, ${\rm d}^2_6\big(\sigma_{1,5}e^{1,5} +\sigma_{2,4}e^{2,4}\big)=(-\sigma_{1,5}+\sigma_{2,4})e^{1,2,3}$ and ${\rm d}^2_7\big(\sigma_{1,6}e^{1,6} +\sigma_{2,5}e^{2,5}+\sigma_{3,4}e^{3,4}\big) =(-\sigma_{1,6} +\sigma_{2,5} +\sigma_{3,4})e^{1,2,4}$.

If $k\ge p+2$ and $\phi=\sum \sigma_{i,j} e^{i,j}\in C^2_k\big(\m_2^\lambda(p)\big)$, then~(\ref{2-coboundary}) implies that ${\rm d}^2_k(\phi)$ will contain the
 terms
 \[\sum_{\substack{k-p\le i< s(k)\\ i+j=k}} (\sigma_{i,j}+\sigma_{i+1,j-i})e^{1,i,j-1},\] where $s(k)=k/2-1$
 if $k$ is even and $s(k)=(k-1)/2$ if $k$ is odd. If, in addition, $\phi$ is a cocycle, then we have the system of equations
 \begin{gather} \label{system}
 \sigma_{i,j}+\sigma_{i+1,j-i} = 0,
 \end{gather}
where $k-p\le i< s(k)$ and $i+j=k$. Note that every coefficient $\sigma_{i,j}$ of $\phi$ occurs in the system~(\ref{system}). Now, if $k<2p-2$ is even, then ${\rm d}^2_k(\phi)$ contains exactly one term with $e^{2,k/2-2,k/2}$, and hence $\sigma_{k/2-2,k/2+2}=0$. The system~(\ref{system}) then implies that $\sigma_{i,j}=0$ for all $i$, $j$ and hence $\phi=0$. Likewise, if $k<2p-1$ is odd, then $e^{2,(k-1)/2-1,(k-1)/2}$ occurs once in ${\rm d}^2_k(\phi)$ forcing $\sigma_{(k-1)/2-1,(k-1)/2+2}=0$, and hence $\phi=0$. We can use (\ref{2-coboundary}) to directly check that $\ker \big({\rm d}^2_{2p-2}\big)=\ker \big({\rm d}^2_{2p-1}\big)=0$ so that $\ker \big({\rm d}^2_k\big)=0$ for
 $k\ge p+2$.

Finally, if $8\le k\le p+1$, then ${\rm d}^2_k(\phi)$ will also contain the terms
 \[\sum_{\substack{2\le i< s(k)\\ i+j=k}}
 (\sigma_{i,j}+\sigma_{i+1,j-i})e^{1,i,j-1}\] in addition to the term $-\sigma_{1,k-1}e^{1,2,k-3}$. If $\phi$ is a cocycle, then we
 have the system of equations
 \begin{gather}
 -\sigma_{1,k-1}+\sigma_{2,k-2}+\sigma_{3,k-3} =0,\nonumber\\
 \sigma_{i,j}+\sigma_{i+1,j-i} = 0, \qquad 3\le i < s(k).\label{system2}
 \end{gather}
 The same argument used for $k\ge p+2$ above shows that the system~(\ref{system2}) implies $\sigma_{i,j}=0$ for $i\ge 3$. Therefore $\sigma_{1,k-1}=\sigma_{2,k-2}$ and $\phi_k=e^{1,k-1}+e^{2,k-2}$ spans $\ker \big({\rm d}^2_k\big)$.
\end{proof}

\begin{Theorem}\label{ordinary2}
 If $p=5$, then
 \[\dim\big(H^2\big(\m_2^\lambda(5)\big)\big)=3\] and the
 cohomology classes of the cocycles
 $\big\{e^{1,4}, e^{1,5}+e^{2,4}, e^{2,5}-e^{3,4}\big\}$ form a basis.

 If $p>5$, then
 \[\dim\big(H^2\big(\m_2^\lambda(p)\big)\big)=3\] and the
 cohomology classes of the cocycles $\big\{e^{1,4}, e^{1,6}+e^{3,4}, e^{1,p}+e^{2,p-1}\big\}$ form a basis.
\end{Theorem}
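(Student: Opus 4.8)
The plan is to exploit the fact that the Chevalley--Eilenberg differentials are graded maps, so that $H^2\big(\m_2^\lambda(p)\big)=\bigoplus_k H^2_k$ decomposes as a direct sum over the weights $k$, with $H^2_k=\ker\big({\rm d}^2_k\big)/\operatorname{im}\big({\rm d}^1_k\big)$ where ${\rm d}^1_k\colon C^1_k\to C^2_k$. This reduces the computation to a weight-by-weight bookkeeping of two data: the kernels $\ker\big({\rm d}^2_k\big)$, already described in Lemma~\ref{ordinary2kernel}, and the images of ${\rm d}^1$.

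First I would pin down $\operatorname{im}\big({\rm d}^1_k\big)$. By (\ref{1-coboundary}) one has ${\rm d}^1\big(e^k\big)=\phi_k=e^{1,k-1}+e^{2,k-2}$, while $C^1_k\ne 0$ only for $1\le k\le p$ and ${\rm d}^1\big(e^1\big)={\rm d}^1\big(e^2\big)=0$. Hence $\operatorname{im}\big({\rm d}^1_k\big)=\spn(\phi_k)$ is one-dimensional exactly for $3\le k\le p$ and is zero otherwise; in particular it vanishes in weight $k=p+1$, which is the source of one surviving class.

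For $p>5$ I would then compare, weight by weight, the kernel list of Lemma~\ref{ordinary2kernel} with this image. In every weight $3\le k\le p$ the kernel contains $\phi_k$, so $H^2_k\ne 0$ precisely when $\dim\ker\big({\rm d}^2_k\big)>1$: this happens only at $k=5$, where $\ker$ is spanned by $e^{1,4},e^{2,3}$ and $\operatorname{im}=\spn\big(e^{1,4}+e^{2,3}\big)$, leaving the class of $e^{1,4}$, and at $k=7$, where $\ker=\spn\big(\phi_7,\,e^{1,6}+e^{3,4}\big)$ and $\operatorname{im}=\spn(\phi_7)$, leaving the class of $e^{1,6}+e^{3,4}$. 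At $k=p+1$ the kernel is $\spn\big(e^{1,p}+e^{2,p-1}\big)$ but the image is $0$, contributing the class of $e^{1,p}+e^{2,p-1}$; every other weight cancels. This yields the three listed basis classes and $\dim H^2=3$.

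The case $p=5$ needs separate treatment, and this is the main obstacle, since Lemma~\ref{ordinary2kernel} is stated only for $p>5$: for $p=5$ the weight range is short enough that the generic vanishing $\ker\big({\rm d}^2_k\big)=0$ for $k\ge p+2$ fails (e.g.\ cochains such as $e^{1,6}$ no longer exist). Here I would recompute $\ker\big({\rm d}^2_k\big)$ directly from (\ref{2-coboundary}) for $3\le k\le 9$, taking care with the vanishing conventions $e^{i,j}=0$ and $e^{s,t,u}=0$ for non-increasing indices at the boundary weights. One finds ${\rm d}^2_k=0$ for $k\le 5$, a single relation in each of $k=6,7$, and injectivity for $k=8,9$; quotienting by $\operatorname{im}\big({\rm d}^1\big)=\spn\big(e^{1,2},\,e^{1,3},\,e^{1,4}+e^{2,3}\big)$ then leaves exactly the classes of $e^{1,4}$ (weight $5$), $e^{1,5}+e^{2,4}$ (weight $6$) and $e^{2,5}-e^{3,4}$ (weight $7$), again three-dimensional. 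Throughout, the only genuine care required is at the boundary weights, where several of the wedge basis vectors collapse to zero.
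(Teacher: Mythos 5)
Your proposal is correct and takes essentially the same route as the paper: both compare the graded kernels of ${\rm d}^2$ supplied by Lemma~\ref{ordinary2kernel} with the image of ${\rm d}^1$ described in Remark~\ref{1-cobndbasis}, and both treat $p=5$ separately by recomputing the boundary weights (where $e^{1,6}$ and $\phi_8$ cease to exist) exactly as the paper does for $k=7,8$. Your explicit weight-by-weight decomposition $H^2=\bigoplus_k H^2_k$ is merely a presentational variant of the paper's global change-of-basis in $\ker\big({\rm d}^2\big)$, not a different argument.
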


\begin{proof} If $p=5$, then the results of Lemma~\ref{ordinary2kernel} still hold except for $k=7,8$ where we
 have
 ${\rm d}^2_7\big(\sigma_{2,5}e^{2,5}+\sigma_{3,4}e^{3,4}\big)=\big(\sigma_{2,5}+\sigma_{3,4}\big)e^{1,2,4}$
 and $\ker\big({\rm d}^2_8\big)=0$. It follows that
 \[\big\{e^{1,2}, e^{1,3}, e^{1,4}, e^{2,3},
 e^{1,5}+e^{2,4},e^{2,5}-e^{3,4}\big\}\] is a basis for $\ker
 \big({\rm d}^2\big)$. We can replace $e^{2,3}$ with $\phi_5=e^{1,4}+e^{2,3}$ in
 this basis so that, by Remark~\ref{1-cobndbasis}, the classes of
 $\big\{e^{1,4}, e^{1,5}+e^{2,4}, e^{2,5}-e^{3,4}\big\}$ form a basis for
 $H^2\big(\m_2^\lambda(5)\big)$.

 If $p > 5$, then Lemma~\ref{ordinary2kernel} gives a basis for $\ker \big({\rm d}^2\big)$. We can again replace $e^{2,3}$ with $\phi_5=e^{1,4}+e^{2,3}$ in this basis so that, by Remark~\ref{1-cobndbasis}, the classes of $\big\{e^{1,4}, e^{1,6}+e^{3,4}, e^{1,p}+e^{2,p-1}\big\}$ form a basis for
 $H^2\big(\m_2^\lambda(p)\big)$.
\end{proof}

\subsection[Restricted cohomology for $\lambda=0$]{Restricted cohomology for $\boldsymbol{\lambda=0}$}\label{section4.2}
If $\lambda=0$, then (\ref{induced2}) shows that $\ind^2=0$ so that
every ordinary 2-cocycle $\phi\in C^2\big(\m_2^0(p)\big)$ gives rise to a~restricted 2-cocycle $(\phi,\tilde \phi)\in
C^2_*\big(\m_2^0(p)\big)$. Moreover, in the case that $\phi={\rm d}^1(\psi)$ is a~1-coboundary, we can replace $\tilde\phi$ with $\ind^1(\psi)$ and
$\big(\phi,\ind^1(\psi)\big)=\big({\rm d}^1(\psi),\ind^1(\psi)\big)={\rm d}^1_*(\psi)$ is a~restricted 1-coboundary as well, and
${\rm d}^2_*(\phi,\tilde\phi)={\rm d}^2_*\big(\phi,\ind^1(\psi)\big)$ by Remark~\ref{changecomp2}. Finally, ${\rm d}^2_*\big(0,\overline e^k\big)=(0,0)$ for
all $1\le k\le p$, and the $\big(0,\overline e^k\big)$ are clearly linearly independent. Together these remarks prove the following

\begin{Theorem}\label{zerolambda} Let $\lambda=0$.
 If $p=5$, then
 \[\dim\big(H_*^2\big(\m_2^0(5)\big)\big)=8\] and the
 cohomology classes of the cocycles
 \[\big\{\big(0,\overline e^1\big),\big(0,\overline e^2\big),\big(0,\overline e^3\big),
 \big(0,\overline e^4\big), \big(0,\overline e^5\big), \big(e^{1,4},\tilde e^{1,4}\big),
 \big(\phi_6, \tilde \phi_6\big), \big(\xi, \tilde \xi\big)\big\}\] form a basis where
 $\xi=e^{2,5}-e^{3,4}$.

 If $p>5$, then
 \[\dim\big(H^2_*\big(\m_2^0(p)\big)\big)=p+3\] and the cohomology
 classes of
 \[\big\{\big(0,\overline e^1\big),\dots, \big(0,\overline e^p\big),\big( e^{1,4},\tilde
 e^{1,4}\big),\big(\eta,\tilde \eta\big),\big(\phi_{p+1},\tilde \phi_{p+1}\big)\big\}\]
 form a basis where $\eta=e^{1,6}+e^{3,4}$.
\end{Theorem}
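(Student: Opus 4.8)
The plan is to exploit the fact that $\lambda=0$ decouples the two slots of the restricted complex, so that $H^2_*$ splits as the ordinary $H^2$ (already computed in Theorem~\ref{ordinary2}) together with the $p$-dimensional space of Frobenius homomorphisms. First I would record the two simplifications that $\lambda=0$ forces. By (\ref{p-op}) we have $g^{[p]}=0$ for every $g$, so $\ind^1(\psi)(g)=\psi\big(g^{[p]}\big)=0$, and by (\ref{induced2}) we have $\ind^2=0$. Consequently the restricted differentials reduce to $d^1_*(\psi)=\big(d^1(\psi),0\big)$ and $d^2_*(\phi,\omega)=\big(d^2(\phi),0\big)$; in particular the second (Frobenius) slot is inert under both maps.

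Next I would identify the cocycle and coboundary spaces and count. Since $d^2_*(\phi,\omega)=\big(d^2\phi,0\big)$, a restricted $2$-cochain $(\phi,\omega)$ is a cocycle exactly when $\phi\in\ker d^2=Z^2$, with $\omega$ any map having the $*$-property with respect to $\phi$. Writing $\omega=\tilde\phi+\nu$ with $\nu\in\Hom_{\rm Fr}\big(\m_2^0(p),\F\big)=\spn\big\{\overline e^k\big\}$ exhibits the restricted cocycle space as fibered over $Z^2$ with $p$-dimensional fiber, so its dimension is $\dim Z^2+p$. For the coboundaries, $d^1_*(\psi)=\big(d^1\psi,0\big)$: because $\ind^1(\psi)=0$ vanishes on the basis and has the $*$-property with respect to $d^1\psi$, uniqueness of such maps forces $\widetilde{d^1\psi}=0$, so the restricted coboundary space maps isomorphically onto the ordinary $B^2$ (with basis $\{\phi_3,\dots,\phi_p\}$ by Remark~\ref{1-cobndbasis}) and carries trivial second slot. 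The count is then immediate:
\[
 \dim H^2_*=\big(\dim Z^2+p\big)-\dim B^2=\dim H^2+p=3+p,
\]
using $\dim H^2=3$ from Theorem~\ref{ordinary2}; this gives $8$ when $p=5$ and $p+3$ when $p>5$.

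Finally I would verify the asserted list is a basis by proving independence in $H^2_*$. The three ordinary representatives of Theorem~\ref{ordinary2} (namely $e^{1,4},\phi_6,\xi=e^{2,5}-e^{3,4}$ for $p=5$, and $e^{1,4},\eta=e^{1,6}+e^{3,4},\phi_{p+1}$ for $p>5$) lift to cocycles $(\phi_i,\tilde\phi_i)$, and the assignment $[\phi]\mapsto\big[(\phi,\tilde\phi)\big]$ is a well-defined injection $H^2\hookrightarrow H^2_*$, well-definedness following from linearity of $\phi\mapsto\tilde\phi$ together with $\widetilde{d^1\psi}=0$. A relation $\sum c_i(\phi_i,\tilde\phi_i)+\sum b_k\big(0,\overline e^k\big)=d^1_*(\psi)$ forces $\sum c_i\phi_i=d^1\psi\in B^2$ in the first slot, hence all $c_i=0$ since the $\phi_i$ are independent modulo $B^2$; the second slot then reads $\sum b_k\overline e^k=0$, whence all $b_k=0$ as the $\overline e^k$ are linearly independent. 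The one point where I would be most careful is the identity $\widetilde{d^1\psi}=0$: it is precisely what guarantees that coboundaries never reach the Frobenius slot, so that the $p$ classes $\big(0,\overline e^k\big)$ survive in $H^2_*$ and the decomposition $H^2\oplus\Hom_{\rm Fr}$ is genuinely direct rather than only a dimension bound.
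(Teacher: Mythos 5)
Your proof is correct and takes essentially the same approach as the paper's own argument (the paragraph preceding the theorem): since $\lambda=0$ kills both $\ind^1$ and $\ind^2$, ordinary cocycles lift via $\phi\mapsto\big(\phi,\tilde\phi\big)$, ordinary coboundaries lift to restricted coboundaries, and the $\big(0,\overline e^k\big)$ supply the Frobenius summand, giving $H^2_*\cong H^2\oplus\Hom_{\rm Fr}\big(\m_2^0(p),\F\big)$. Your explicit count $\dim Z^2_*=\dim Z^2+p$, $\dim B^2_*=\dim B^2$, and the uniqueness argument forcing $\widetilde{{\rm d}^1\psi}=0$ merely spell out details the paper leaves implicit.
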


\begin{Remark}\label{mostarezero} If $p\ge 5$, the maps $\tilde\phi_k$ are
 identically zero for $k<p+1$ because the $(p-1)$-fold bracket in~(\ref{starprop}) always gives a multiple of $e_p$ so that $\phi_k$
 vanishes on $e_p\wedge \m_2^\lambda(p)$ when $k<p+1$. This, in turn,
 implies that $\tilde\phi_k\in\Hom_{\rm Fr}\big(\m_2^\lambda(p),\F\big)$, and
 since $\tilde\phi_k(e_i)=0$ for all $i$, we have
 $\tilde\phi_k=0$. Likewise, $\tilde e^{1,4}=0$, $\tilde\eta=0$, and
 $\tilde\xi=0$ unless $p=5$. The restriction of $\phi_{p+1}$ to
 $e_p\wedge \m_2^\lambda(p)$ is equal to $e^{1,p}$ so that
 $\tilde\phi_{p+1}=\tilde e^{1,p}$. If $p=5$, then the restriction of
 $\xi$ to $e_5\wedge \m_2^\lambda(5)$ is equal to $e^{2,5}$ so
 $\tilde\xi=\tilde e^{2,5}$. We can then use~(\ref{starprop}) to give
 explicit descriptions for $\tilde\phi_{p+1}$ and $\tilde\xi$ (when $p=5$):
 \begin{gather*}
 \tilde\phi_{p+1}\left (\sum_{i=1}^p \alpha_ie_i\right )=\tilde
 e^{1,p}\left (\sum_{i=1}^p \alpha_ie_i\right
 ) =\alpha_1^{p-1}\alpha_2,\nonumber\\
 \tilde\xi\left (\sum_{i=1}^5 \alpha_ie_i\right )=\tilde
 e^{2,5}\left (\sum_{i=1}^5 \alpha_ie_i\right
 ) =\frac{1}{2}\alpha_1^{3}\alpha_2^2.
 \end{gather*}
\end{Remark}

\begin{Remark} The dimensions in Theorem~\ref{zerolambda} can also be deduced from Theorems~\ref{1-coho} and~\ref{ordinary2} and the six-term exact sequence in~\cite{Hochschild1954} precisely as in~\cite[Remark~4]{EvFi2019}.
\end{Remark}

\subsection[Restricted cohomology for $\lambda\ne 0$]{Restricted cohomology for $\boldsymbol{\lambda\ne 0}$}\label{section4.3}

If $\phi=\sum\sigma_{ij}e^{i,j}$ and $(\phi,\omega)\in C^2_*\big(\m_2^\lambda\big)$, then (\ref{induced2}) shows that
\[\ind^2(\phi,\omega)(e_j,e_i)=\lambda_i\sigma_{jp}.\]
Therefore, if $\lambda\ne 0$, then
${\rm d}^2_*(\phi,\omega)=\big({\rm d}^2\phi,\ind^2(\phi,\omega)\big)=(0,0)$ if and only
if ${\rm d}^2\phi=0$ and $\sigma_{1p}=\sigma_{2p}=\cdots = \sigma_{p-1 p}=0$.

\begin{Theorem}\label{nonzerolambda}
 If $p=5$, then
 \[\dim\big(H^2_*\big(\m_2^\lambda(5)\big)\big)=6\] and the cohomology
 classes of
 \[\big\{\big(0,\overline e^1\big), \big(0,\overline e^2\big),\big(0,\overline
 e^3\big),\big(0,\overline e^4\big), \big(0,\overline e^5\big), \big( e^{1,4},\tilde
 e^{1,4}\big)\big\}\] form a basis.

 If $p>5$, then
 \[\dim\big(H^2_*\big(\m_2^\lambda(p)\big)\big)=p+2\] and the cohomology
 classes of
 \[\big\{\big(0,\overline e^1\big),\dots, \big(0,\overline e^p\big),\big( e^{1,4},\tilde
 e^{1,4}\big),\big(\eta,\tilde \eta\big)\big\}\] form a basis where $\eta=e^{1,6}+e^{3,4} $.
\end{Theorem}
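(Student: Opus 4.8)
The plan is to compute $H^2_*\big(\m_2^\lambda(p)\big)=Z^2_*/B^2_*$ directly, using the ordinary calculation of Theorem~\ref{ordinary2} and Lemma~\ref{ordinary2kernel} together with the cocycle characterization stated just before the theorem. First I would record that, since $\lambda\ne 0$, a pair $(\phi,\omega)$ with $\phi=\sum\sigma_{ij}e^{i,j}$ is a restricted $2$-cocycle exactly when $\phi\in\ker {\rm d}^2$ and $\sigma_{1p}=\cdots=\sigma_{p-1,p}=0$; by Remark~\ref{changecomp2} the second component is then unconstrained beyond carrying the $*$-property. Consequently $Z^2_*$ splits as a direct sum of the $p$-dimensional space $\spn\big\{\big(0,\overline e^k\big)\,|\,1\le k\le p\big\}$ of Frobenius homomorphisms and the lifts $(\phi,\tilde\phi)$ of the cocycles $\phi$ in $Z^2_{\mathrm{res}}:=\big\{\phi\in\ker {\rm d}^2\,|\,\sigma_{jp}=0\big\}$.

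Next I would pin down $Z^2_{\mathrm{res}}$ from the explicit basis of $\ker {\rm d}^2$ in Lemma~\ref{ordinary2kernel}. Since $e^{j,p}$ lives in degree $j+p$, the only basis cocycles that can carry such a term lie in degrees $k\ge p+1$: for $p>5$ this is precisely $\phi_{p+1}=e^{1,p}+e^{2,p-1}$ (with $\sigma_{1p}=1$), so $Z^2_{\mathrm{res}}$ has dimension $p$; for $p=5$ the degree-$(p+2)=7$ anomaly produces the extra cocycle $\xi=e^{2,5}-e^{3,4}$ (with $\sigma_{25}=1$) alongside $\phi_6=e^{1,5}+e^{2,4}$, so both must be discarded and $Z^2_{\mathrm{res}}$ has dimension $4$. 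Because the coboundary space $B^2=\spn\{\phi_3,\dots,\phi_p\}$ lies inside $Z^2_{\mathrm{res}}$ while $\phi_{p+1}$ (resp.\ $\phi_6,\xi$) does not, I get $\ker {\rm d}^2=Z^2_{\mathrm{res}}\oplus\F\phi_{p+1}$ (resp.\ $\oplus\spn\{\phi_6,\xi\}$), whence $Z^2_{\mathrm{res}}/B^2$ is obtained from the $3$-dimensional $H^2$ of Theorem~\ref{ordinary2} by killing $[\phi_{p+1}]$ (resp.\ $[\phi_6],[\xi]$). This leaves the basis $\big\{[e^{1,4}],[\eta]\big\}$ with $\eta=e^{1,6}+e^{3,4}$ for $p>5$, and $\big\{[e^{1,4}]\big\}$ for $p=5$.

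To assemble $H^2_*$ I would compute $B^2_*$ from ${\rm d}^1_*(e^m)=\big(\phi_m,\ind^1(e^m)\big)$ for $3\le m\le p$, using $\overline e^i(e_k)=\delta_{ik}$ and $e_k^{[p]}=\lambda_k e_p$ to see that $\ind^1(e^m)=0$ for $m<p$ while $\ind^1(e^p)=\sum_i\lambda_i\overline e^i$. The key structural point is that the projection $(\phi,\omega)\mapsto\phi$ carries $B^2_*$ isomorphically onto $B^2$: a coboundary with vanishing first component forces $\sum\mu_m\phi_m=0$, hence $\mu_m=0$ for all $m$ by Remark~\ref{1-cobndbasis}, so it is the zero cochain and $\dim B^2_*=p-2$. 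This yields the short exact sequence $0\to\Hom_{\mathrm{Fr}}\big(\m_2^\lambda(p),\F\big)\to H^2_*\big(\m_2^\lambda(p)\big)\to Z^2_{\mathrm{res}}/B^2\to 0$, so that $\dim H^2_*=p+\dim\big(Z^2_{\mathrm{res}}/B^2\big)$, equal to $p+2$ for $p>5$ and $5+1=6$ for $p=5$. Lifting the bases of the two outer terms (and noting $\tilde e^{1,4}=\tilde\eta=0$ by Remark~\ref{mostarezero}) produces the stated bases $\big\{\big(0,\overline e^1\big),\dots,\big(0,\overline e^p\big),\big(e^{1,4},\tilde e^{1,4}\big),(\eta,\tilde\eta)\big\}$ and $\big\{\big(0,\overline e^1\big),\dots,\big(0,\overline e^5\big),\big(e^{1,4},\tilde e^{1,4}\big)\big\}$.

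The main obstacle I anticipate is the bookkeeping of the two components in the coboundary space: I must verify that ${\rm d}^1_*(e^p)$ contributes the genuinely nonzero induced term $\sum_i\lambda_i\overline e^i$ yet creates no relation \emph{purely} among the Frobenius classes $\big(0,\overline e^k\big)$, so that all $p$ of them survive in cohomology --- this is exactly the injectivity of the projection on $B^2_*$. The secondary subtlety is the degree-$(p+2)$ breakdown of Lemma~\ref{ordinary2kernel} at $k=7$, which is what forces the separate treatment of $p=5$.
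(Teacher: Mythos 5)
Your proposal is correct and takes essentially the same route as the paper, which leaves this proof implicit: the characterization of restricted cocycles ($\mathrm{d}^2\phi=0$ and $\sigma_{1p}=\cdots=\sigma_{p-1,p}=0$) stated just before the theorem, combined with Lemma~\ref{ordinary2kernel}, Theorem~\ref{ordinary2}, Remark~\ref{1-cobndbasis} and Remark~\ref{changecomp2}. Your write-up simply makes explicit the bookkeeping the paper omits --- discarding $\phi_{p+1}$ (resp.\ $\phi_6$ and $\xi$ when $p=5$) from the ordinary kernel, and checking that the projection $(\phi,\omega)\mapsto\phi$ is injective on $B^2_*$ so that ${\rm d}^1_*(e^p)=\big(\phi_p,\sum_i\lambda_i\overline e^i\big)$ kills no class $\big(0,\overline e^k\big)$ --- and both steps are carried out correctly.
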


\section{One-dimensional central extensions}\label{section5}

One-dimensional central extensions $E=\g\oplus\F c$ of an ordinary Lie algebra $\g$ are parameterized by the cohomology group $H^2(\g)$ \cite[Chapter~1, Section~4.6]{FuchsBook}, and restricted one-dimensional central extensions of a restricted Lie algebra $\g$ with $c^{[p]}=0$ are parameterized by the restricted cohomology group $H^2_*(\g)$
\cite[Theorem~3.3]{Hochschild1954}. If $(\phi,\omega)\in C^2_*(\g)$ is a~restricted 2-cocycle, then the corresponding restricted one-dimensional central extension $E=\g\oplus\F c$ has Lie bracket and $[p]$-operation defined by
\begin{gather}
 [g,h] =[g,h]_{\g}+\phi(g\wedge h) c,\nonumber\\
 [g, c] = 0,\nonumber\\
 g^{[p]} = p^{[p]_{\g}}+\omega(g) c,\nonumber\\
 c^{[p]} = 0,\label{genonedimext}
\end{gather}
where $[\cdot,\cdot]_\g$ and $\cdot^{[p]_\g}$ denote the Lie bracket and $[p]$-operation in $\g$, respectively \cite[equations~(26) and~(27)]{EvansFuchs2008}. We can use (\ref{genonedimext}) to explicitly describe the restricted one-dimensional central extensions corresponding to the restricted cocycles in Theorems~\ref{zerolambda} and~\ref{nonzerolambda}. For the rest of this section, let $g=\sum \alpha_ie_i$ and $h=\sum\beta_ie_i$ denote two arbitrary elements of $\m_2^\lambda(p)$.

Let $E_k=\m_2^\lambda(p)\oplus \F c$ denote the one-dimensional restricted central extension of $\m_2^\lambda(p)$ determined by the cohomology class of the restricted cocycle $\big(0,\overline e^k\big)$. Then just as with~$\m_0^\lambda(p)$ and~$\V(p)$ (see \cite[Theorem~5.1]{EvFi2019} and \cite[Theorem~3.1]{EvFiPe2016}), the
$\big(0,\overline e^k\big)$ span a $p$-dimensional subspace of~$H^2_*$, and~(\ref{genonedimext}) gives the bracket and $[p]$-operation in~$E_k$:
\begin{gather*}
 [g,h] =[g,h]_{\m_2^\lambda(p)},\\
 [g, c] = 0,\\
 g^{[p]} = g^{[p]_{\m_2^\lambda(p)}}+ \alpha_k^p c,\\
 c^{[p]} = 0.
\end{gather*}

For restricted cocycles $(\phi,\tilde\phi)$ with $\phi\ne 0$, we summarize the corresponding restricted one-dimensional central extensions $E_{(\phi,\tilde\phi)}$ in the following tables. Everywhere in the tables, we omit the brackets $[g,c]=0$ and $[p]$-operation $c^{[p]}=0$ for brevity.

If $\lambda=0$, then there are three restricted cocycles
$(\phi,\tilde\phi)$ with $\phi\ne 0$ for a given prime
(Theorem~\ref{zerolambda}). We note that if $\lambda=0$, then
(\ref{p-op}) implies $g^{[p]_{\m_2^\lambda(p)}}=0$ for all
$g\in \m_2^\lambda(p)$.
\begin{table}[h!]\centering
\caption{Restricted one-dimensional central extensions with $\phi\ne 0$ and $\lambda=0$.}\vspace{1mm}
 \begin{tabular}{|l|l|}
 \hline
 \multicolumn{2}{|c|}{$p=5$}\\
 \hline
 $\big(e^{1,4},0\big)$ &
 $[g,h]=[g,h]_{\m_2^\lambda(p)}+(\alpha_1\beta_4-\alpha_4\beta_1) c$\tsep{2pt}\\
 & $g^{[p]} = 0$\\
 \hline
 $\big(\xi,\tilde\xi\big)$ &
 $[g,h]=[g,h]_{\m_2^\lambda(p)}+(\alpha_2\beta_5-\alpha_5\beta_2-\alpha_3\beta_4+\alpha_4\beta_3) c$\tsep{2pt}\\
 $\xi=e^{2,5}-e^{3,4}$ & $g^{[p]} = \frac{1}{2}\alpha_1^3\alpha_2^2c$\bsep{2pt}\\
 \hline
 $(\phi_6,\tilde\phi_6)$ &
 $[g,h]=[g,h]_{\m_2^\lambda(p)}+(\alpha_1\beta_5-\alpha_5\beta_1+\alpha_2\beta_4-\alpha_4\beta_2) c$\tsep{2pt}\\
 & $g^{[p]} = \frac{1}{2}\alpha_1^4\alpha_2c$\bsep{2pt}\\
 \hline
 \multicolumn{2}{|c|}{$p>5$}\\
 \hline
 $\big(e^{1,4},0\big)$ &
 $[g,h]=[g,h]_{\m_2^\lambda(p)}+(\alpha_1\beta_4-\alpha_4\beta_1) c$\tsep{2pt}\\
 & $g^{[p]} = 0$\\
 \hline
 $(\eta,0)$ &
 $[g,h]=[g,h]_{\m_2^\lambda(p)}+(\alpha_1\beta_6-\alpha_6\beta_1+\alpha_3\beta_4-\alpha_4\beta_3) c$\tsep{2pt}\\
 $\eta=e^{1,6}+e^{3,4}$ & $g^{[p]} = 0$\\
 \hline
 $(\phi_{p+1},\tilde\phi_{p+1})$ &
 $[g,h]=[g,h]_{\m_2^\lambda(p)}+(\alpha_1\beta_p-\alpha_p\beta_1+\alpha_2\beta_{p-1}-\alpha_{p-1}\beta_2) c$\tsep{2pt}\\
 & $g^{[p]} = \alpha_1^{p-1}\alpha_2c$\\
 \hline
 \end{tabular}
\end{table}

If $\lambda\ne 0$ and $p=5$, then the only restricted cocycle
$(\phi,\tilde\phi)$ with $\phi\ne 0$ is $(e^{1,4},0)$. If $p>5$, then
the restricted cocycles $(\phi,\tilde\phi)$ with $\phi\ne 0$ are
$(e^{1,4},0)$ and $(\eta,0)$ (Theorem~\ref{nonzerolambda}).
\begin{table}[h!]\centering
\caption{Restricted one-dimensional central extensions with $\phi\ne 0$ and $\lambda\ne 0$.}\vspace{1mm}
 \begin{tabular}{|l|l|}
 \hline
 \multicolumn{2}{|c|}{$p=5$}\\
 \hline
 $\big(e^{1,4},0\big)$ &
 $[g,h]=[g,h]_{\m_2^\lambda(p)}+(\alpha_1\beta_4-\alpha_4\beta_1) c$\tsep{2pt}\\
 & $g^{[p]} = g^{[p]_{\m_2^\lambda(p)}}$\\
 \hline
 \multicolumn{2}{|c|}{$p>5$}\\
 \hline
 $\big(e^{1,4},0\big)$ &
 $[g,h]=[g,h]_{\m_2^\lambda(p)}+(\alpha_1\beta_4-\alpha_4\beta_1) c$\tsep{2pt}\\
 & $g^{[p]} = g^{[p]_{\m_2^\lambda(p)}}$\\
 \hline
 $(\eta,0)$ & $[g,h]=[g,h]_{\m_2^\lambda(p)}+(\alpha_1\beta_6-\alpha_6\beta_1+\alpha_3\beta_4-\alpha_4\beta_3) c$\tsep{2pt}\\
 $\eta=e^{1,6}+e^{3,4}$ & $g^{[p]} = g^{[p]_{\m_2^\lambda(p)}}$\\
 \hline
 \end{tabular}
\end{table}

\subsection*{Acknowledgements}

The authors are grateful to Dmitry Fuchs for fruitful conversations, and the referees whose comments greatly improved the exposition of this paper.

\pdfbookmark[1]{References}{ref}
\LastPageEnding

\end{document}